\pgfplotsset{compat=1.14}
\newtheorem*{definition}{Definition}
\newtheorem*{conjecture}{Conjecture}
\newtheorem{theorem}{Theorem}
\newtheorem{proposition}{Proposition}
\begin{document}

\date{\today}
\title{Polynomial atlases on manifolds}

\author[N.~Juricic]{Nicholas~Juricic}
\address{
NJ,
Department of Mathematics\\
University of Arizona\\
617 N Santa Rita Avenue, P.O. Box 210089\\
Tucson, AZ 85721-1009, USA}
\email{juricic@math.arizona.edu}


\maketitle

\begin{abstract}
    We consider manifolds whose transition maps are restrictions of polynomial mappings $\mathbb{R}^n\to\mathbb{R}^n$, and use them to give an equivalent statement of the Jacobian conjecture over the real field.
\end{abstract}

\section{Introduction} \label{sec:intro}

Let $n$ be a positive integer. If $F: \mathbb{R}^n \to \mathbb{R}^n$ is a polynomial\footnote{A polynomial function $\mathbb{R}^n \to \mathbb{R}^n$ has polynomial component functions $\mathbb{R}^n \to \mathbb{R}$.} automorphism, then the Jacobian determinant of $F$ is identically equal to some real number $c \neq 0$, and we write $|J_F| \equiv c$. The converse of this statement is precisely the Jacobian conjecture over $\mathbb{R}$.
 
\begin{conjecture}
If $F : \mathbb{R}^n \to \mathbb{R}^n$ is a polynomial satisfying $|J_F| \equiv c$ for some $c \neq 0$, then $F$ is invertible and $F^{-1}$ is a polynomial.
\end{conjecture}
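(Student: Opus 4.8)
\medskip
\noindent\emph{Sketch of approach.} The honest opening remark is that this is the Jacobian conjecture of Keller, open for every $n \ge 2$ even over $\mathbb{C}$, and the real form with a \emph{constant} Jacobian determinant is no weaker: complexifying $F$ shows it follows from the complex conjecture, while conversely --- using $\mathbb{C}^m \cong \mathbb{R}^{2m}$, under which a holomorphic map has real Jacobian determinant $|J|^2$ --- the complex conjecture follows from its real, constant-Jacobian form, so the two are equivalent. The hypothesis that $|J_F|$ is a nonzero \emph{constant} is essential; dropping it makes the real statement false already for $n = 2$ by Pinchuk's example. I would therefore not attempt a direct proof. In keeping with the title, the aim is to trade this analytic problem for a structural statement about manifolds carrying polynomial atlases.

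First I would normalize $F$. Composing on the left with a linear map of determinant $c^{-1}$ makes $|J_F| \equiv 1$, and conjugating by affine maps arranges $F(0) = 0$ and $J_F(0) = I$; none of this affects whether $F^{-1}$ exists or is polynomial. Then I would invoke the degree-reduction theorem of Bass--Connell--Wright and Yagzhev: the conjecture for all $n$ follows from the case $F = \mathrm{id} + H$ with $H$ homogeneous of degree $3$, and for such $F$ the identity $|J_F| \equiv 1$ is equivalent to $J_H$ being nilpotent (at every point, equivalently over $\mathbb{R}[x_1,\dots,x_n]$). The target thus becomes: if $H$ is cubic-homogeneous and $J_H$ nilpotent, then $\mathrm{id} + H$ has a polynomial inverse --- equivalently, its formal power-series inverse terminates.

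The next step is to pass to the geometric side. Since $|J_F| \equiv 1$ forces $F$ to be a local diffeomorphism, one can try to assemble the local inverses of $F$ into an atlas: on each open set $W$ on which $F$ restricts to a diffeomorphism, take the chart $(F|_W)^{-1}$, together with the identity chart; the transition maps involving the identity chart are restrictions of $F$ itself, hence polynomial, and $F^{-1}$ is globally defined and polynomial exactly when these charts glue to a polynomial atlas on all of $\mathbb{R}^n$ refining the standard one. The content the paper must supply is the converse packaging: that a space which is only assumed to be locally modeled on $\mathbb{R}^n$ with polynomial transition maps, and which --- as the manifold built from $F$ does here --- is contractible and of the finite-degree gluing type $F$ produces, must in fact be standard affine space with a globally polynomial coordinate. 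Granting such an equivalence, the conjecture is precisely this manifold-theoretic assertion.

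The main obstacle is the one the reformulation does not remove: bounding the degree of $F^{-1}$, i.e.\ showing the formal inverse of $\mathrm{id} + H$ terminates, i.e.\ --- on the geometric side --- ruling out genuinely unbounded polynomial transition data over a contractible base. I expect no shortcut there; the value of the geometric picture is that it isolates this as a clean structural statement and gives convenient test cases: $n = 1$ and $\deg F \le 2$ are immediate, and the case $\operatorname{rank} J_H \le 1$ can be checked directly, each a consistency check on the equivalence rather than progress on the conjecture itself.
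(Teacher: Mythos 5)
This statement is the Jacobian conjecture itself; the paper states it as an open conjecture and offers no proof, so there is nothing to compare your attempt against, and you were right not to claim one. Your surrounding remarks are essentially accurate as background: the complexification argument showing the real constant-Jacobian form follows from the complex conjecture, the converse via $\mathbb{C}^m\cong\mathbb{R}^{2m}$ and $|\det_{\mathbb{C}}J_F|^2$, Pinchuk's counterexample to the non-constant-Jacobian version in $\mathbb{R}^2$, and the Bass--Connell--Wright/Yagzhev reduction to $F=\mathrm{id}+H$ with $H$ cubic-homogeneous and $J_H$ nilpotent are all standard and correctly stated. None of this appears in the paper, whose actual content is only the reformulation (Theorem~\ref{thm:manifold}), not any attack on the conjecture.

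One caution about your geometric packaging, since it differs from the paper's and has a soft spot. The paper's Theorem~\ref{thm:manifold} uses the single collection $\mathcal{A}_n$ of \emph{global} pairs $(\mathbb{R}^n,F)$ for every $F\in X_n$; under the paper's definition a chart must be a homeomorphism onto $\mathbb{R}^n$, so global bijectivity of each $F$ is built into the word ``chart,'' and polynomiality of $F^{-1}$ is extracted from the transition map $I_n\circ F^{-1}$ against the identity chart. Your version instead assembles local-inverse charts $(F|_W)^{-1}$ alongside the identity chart, and you assert the transitions involving the identity chart are restrictions of $F$; but condition~(\ref{eqn:polynomialcompatibility}) requires \emph{both} directions to be polynomial restrictions, and the other direction is the local inverse itself --- exactly the object whose polynomiality is at issue. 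So as stated your local atlas is a polynomial atlas if and only if the local inverses are already locally polynomial, which makes the proposed equivalence circular rather than a reformulation. The paper avoids this by never localizing: the equivalence it proves is purely the observation that ``$\mathcal{A}_n$ is a polynomial atlas'' is a verbatim repackaging of ``$X_n$ is a group'' (Proposition~\ref{prop:group}), which in turn is a verbatim repackaging of the conjecture.
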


Despite substantial efforts, the Jacobian conjecture remains an open problem in algebraic geometry, and has diverse connections to different areas of mathematics. For example, the conjecture can be viewed in the context of group theory.

\begin{proposition} \label{prop:group}
Let $X_n$ denote the set of all polynomials $F: \mathbb{R}^n \to \mathbb{R}^n$ such that $|J_F| \equiv c$ for some $c \neq 0$. The set $X_n$ forms a group under function composition if and only if the Jacobian conjecture is true over $\mathbb{R}$. 
\end{proposition}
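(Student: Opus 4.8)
The plan is to notice that $(X_n, \circ)$ automatically satisfies every group axiom except the existence of inverses, and that inverses are supplied precisely by the Jacobian conjecture. I would start from the multiplicativity of Jacobian determinants under composition: the chain rule gives $J_{G \circ F}(x) = J_G(F(x))\, J_F(x)$ for polynomial maps $F, G : \mathbb{R}^n \to \mathbb{R}^n$. Hence if $|J_F| \equiv c$ and $|J_G| \equiv d$ with $c, d \neq 0$, then $|J_{G \circ F}| \equiv cd \neq 0$; since a composite of polynomial maps is again polynomial, $G \circ F \in X_n$, so $X_n$ is closed under composition. Associativity is free for composition of functions, and $\mathrm{id} \in X_n$ because $|J_{\mathrm{id}}| \equiv 1$. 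Being a two-sided identity for $\circ$ on all of $X_n$, the map $\mathrm{id}$ is forced to be the identity element of any monoid or group structure carried by $X_n$. Consequently, the statement ``$X_n$ is a group under composition'' is equivalent to ``every $F \in X_n$ has a two-sided compositional inverse inside $X_n$'', i.e.\ to the existence, for each $F \in X_n$, of a polynomial $G : \mathbb{R}^n \to \mathbb{R}^n$ with $G \circ F = F \circ G = \mathrm{id}$.

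Next I would prove the two implications against this reduction. Assume first the Jacobian conjecture over $\mathbb{R}$, and take $F \in X_n$ with $|J_F| \equiv c \neq 0$. The conjecture gives a polynomial inverse $F^{-1}$; differentiating $F^{-1} \circ F = \mathrm{id}$ and taking determinants yields $J_{F^{-1}}(F(x))\, J_F(x) = 1$, so $|J_{F^{-1}}(F(x))| = 1/c$ for all $x$, and by surjectivity of $F$ we obtain $|J_{F^{-1}}| \equiv 1/c \neq 0$. Thus $F^{-1} \in X_n$, and by the reduction $X_n$ is a group. Conversely, assume $X_n$ is a group, and let $F : \mathbb{R}^n \to \mathbb{R}^n$ be any polynomial with $|J_F| \equiv c \neq 0$, so $F \in X_n$. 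Its group inverse is some $G \in X_n$, in particular a polynomial map $\mathbb{R}^n \to \mathbb{R}^n$, and since the group identity is $\mathrm{id}$ we have $G \circ F = F \circ G = \mathrm{id}$. Hence $F$ is a bijection with polynomial inverse $G$, which is exactly the assertion of the Jacobian conjecture.

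I do not anticipate a genuine difficulty, since the proposition is in essence the remark that among the monoid axioms only invertibility is substantive for $X_n$, and ``invertible with polynomial inverse'' is the conjecture restated. The only step that warrants a careful word is the reduction itself: one must check that closure really holds, so that $\circ$ is a legitimate binary operation on $X_n$ to begin with, and that the abstract group identity is necessarily $\mathrm{id}$, since otherwise the equivalence would not be correctly formulated.
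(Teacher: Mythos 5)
Your proposal is correct and follows essentially the same route as the paper: closure of $X_n$ via multiplicativity of Jacobian determinants under the chain rule, associativity and the identity map for free, and the reduction of the group axioms to the single substantive question of whether each $F \in X_n$ has a polynomial inverse lying in $X_n$, which is the conjecture restated. You additionally verify that the group identity must be $\mathrm{id}$ and spell out the determinant computation giving $|J_{F^{-1}}| \equiv 1/c$, details the paper handles more briefly by citing the inverse function theorem.
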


The purpose of this paper is to elaborate such a connection with the theory of manifolds, as follows: consider $\mathbb{R}^n$ as a topological manifold, and define a collection $\mathcal{A}_n$ of pairs of the form $(\mathbb{R}^n, F)$, where $F : \mathbb{R}^n \to \mathbb{R}^n$ is a polynomial satisfying $|J_F| \equiv c$ for some $c \neq 0$. Our main result is the following equivalence.

\begin{theorem} \label{thm:manifold}
The pair $(\mathbb{R}^n, \mathcal{A}_n)$ determines a polynomial manifold if and only if the Jacobian conjecture is true over $\mathbb{R}$.
\end{theorem}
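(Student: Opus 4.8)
The plan is to unwind what it means for $\mathcal{A}_n$ to be a polynomial atlas on the topological manifold $\mathbb{R}^n$ and to reduce both implications to one elementary fact: a polynomial map $\mathbb{R}^n\to\mathbb{R}^n$ is determined by its restriction to any nonempty open subset of $\mathbb{R}^n$, since such a set is Zariski dense. Throughout I will use that each $F\in\mathcal{A}_n$ satisfies $|J_F|\equiv c\neq 0$, so by the inverse function theorem $F$ is a local homeomorphism and $F(\mathbb{R}^n)$ is open, and I will use that $\mathrm{id}_{\mathbb{R}^n}\in\mathcal{A}_n$.

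For the \emph{if} direction, assume the Jacobian conjecture holds over $\mathbb{R}$. Then each $F\in\mathcal{A}_n$ is a polynomial automorphism, so $(\mathbb{R}^n,F)$ is a genuine chart — a homeomorphism of $\mathbb{R}^n$ onto the open set $F(\mathbb{R}^n)=\mathbb{R}^n$ — the chart domains cover $\mathbb{R}^n$ trivially, and for any $F,G\in\mathcal{A}_n$ the transition map $G\circ F^{-1}$ is a composition of polynomials, hence a polynomial map $\mathbb{R}^n\to\mathbb{R}^n$, with nonzero constant Jacobian by the chain rule. So $\mathcal{A}_n$ is a polynomial atlas and $(\mathbb{R}^n,\mathcal{A}_n)$ is a polynomial manifold.

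For the \emph{only if} direction, assume $(\mathbb{R}^n,\mathcal{A}_n)$ is a polynomial manifold and fix an arbitrary $F\in\mathcal{A}_n$; I must produce a polynomial inverse for $F$. Because $(\mathbb{R}^n,F)$ occurs as a chart, $F$ is a homeomorphism of $\mathbb{R}^n$ onto the open set $U=F(\mathbb{R}^n)$; in particular $F$ is injective and $F^{-1}\colon U\to\mathbb{R}^n$ makes sense. Comparing the chart $(\mathbb{R}^n,F)$ with the chart $(\mathbb{R}^n,\mathrm{id}_{\mathbb{R}^n})$, the atlas compatibility condition forces the transition map $F^{-1}$ to be the restriction to $U$ of some polynomial map $P\colon\mathbb{R}^n\to\mathbb{R}^n$. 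Then $P\circ F$ and $\mathrm{id}_{\mathbb{R}^n}$ are polynomial maps that agree on all of $\mathbb{R}^n$, so $P\circ F=\mathrm{id}_{\mathbb{R}^n}$ as polynomial maps; applying $F$ on the left gives $F\circ P\circ F=F$, so $F\circ P$ and $\mathrm{id}_{\mathbb{R}^n}$ agree on the nonempty open set $U$, whence $F\circ P=\mathrm{id}_{\mathbb{R}^n}$. Thus $F$ is bijective with polynomial inverse $P$. As $F$ ranges over all of $\mathcal{A}_n$ — that is, over all polynomials with nonvanishing constant Jacobian, the set $X_n$ of Proposition~\ref{prop:group} — this is exactly the Jacobian conjecture over $\mathbb{R}$.

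The part that takes the most care is making the definitions interlock in the \emph{only if} direction: the hypothesis that $(\mathbb{R}^n,\mathcal{A}_n)$ determines a polynomial manifold must be read so that it guarantees both that each $(\mathbb{R}^n,F)$ is an honest chart — this injectivity of $F$ being supplied by the atlas hypothesis, not by $|J_F|\equiv c\neq 0$ by itself — and that every $F^{-1}$, a priori defined only on the open set $F(\mathbb{R}^n)$, is the restriction of a globally defined polynomial. Once that is pinned down, the rest is the routine Zariski-density argument sketched above, promoting a one-sided polynomial inverse defined on an open set to a genuine two-sided polynomial inverse; I do not anticipate any further obstacle.
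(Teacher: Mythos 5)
Your proof is correct, and the \emph{only if} direction takes a genuinely different (and somewhat more self-contained) route than the paper's. The paper deduces the Jacobian conjecture by showing that the polynomial-atlas hypothesis makes $X_n$ closed under $(F,G)\mapsto F\circ G^{-1}$ inside the group of homeomorphisms of $\mathbb{R}^n$, hence a subgroup, and then invokes Proposition~\ref{prop:group}. You instead compare each chart $(\mathbb{R}^n,F)$ only with the identity chart, extract a global polynomial $P$ restricting to $F^{-1}$ on $F(\mathbb{R}^n)$, and upgrade the one-sided identity $P\circ F=\mathrm{id}$ to $F\circ P=\mathrm{id}$ by the Zariski-density of the nonempty open set $F(\mathbb{R}^n)$. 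What your version buys is robustness: you do not need to assume a chart $\varphi\colon U\to\mathbb{R}^n$ is surjective onto $\mathbb{R}^n$ (the paper implicitly treats $F$ as a self-homeomorphism of $\mathbb{R}^n$, so that $F\circ G^{-1}$ is globally defined and the subgroup argument goes through), and you avoid routing through the group-theoretic proposition entirely. What the paper's version buys is a cleaner conceptual link between the manifold statement and the group statement, which is the point of the exposition. The \emph{if} direction of your argument is essentially identical to the paper's. One tiny caveat: your claim that the atlas condition ``forces'' $F^{-1}$ to be the restriction of a polynomial presumes $\mathrm{id}_{\mathbb{R}^n}\in\mathcal{A}_n$, which you correctly note holds since $|J_{I_n}|\equiv 1$; no gap there.
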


In what follows, we recall precise definitions of topological and differentiable manifolds, and use them to motivate a definition of polynomial manifolds. We then provide a proof of Proposition~\ref{prop:group} and use it to prove Theorem~\ref{thm:manifold}.

\section{Definitions and results}

A topological $n$-manifold is a topological space that is both Hausdorff and second countable, and in which each point has a neighborhood homeomorphic to $\mathbb{R}^n$. We define a chart on a topological $n$-manifold $M$ as a pair $(U, \varphi)$, where $U$ is an open subset of $M$ and $\varphi : U \to \mathbb{R}^n$ is a homeomorphism. An atlas on $M$ is a collection of charts $\mathcal{A} = \{(U_\alpha, \varphi_\alpha)\}$ on $M$ such that the collection $\{U_\alpha\}$ covers $M$. 

An atlas $\mathcal{A}$ is called differentiable whenever the following condition is satisfied: for any pair of charts $(U,\varphi)$ and $(V, \psi)$ in $\mathcal{A}$, either $U \cap V$ is empty, or the maps
\begin{equation} \label{eqn:transition}
    \psi \circ \varphi^{-1} : \varphi(U \cap V) \to \psi(U \cap V)\quad\text{and}\quad \varphi \circ \psi^{-1} : \psi(U \cap V) \to \varphi(U \cap V)
\end{equation}
are differentiable as mappings between open subsets of $\mathbb{R}^n$. Two differentiable atlases $\mathcal{A}$ and $\mathcal{A}'$ on $M$ are said to be compatible whenever each $(U, \varphi) \in \mathcal{A}$ and $(V, \psi) \in \mathcal{A}'$ satisfy the condition (\ref{eqn:transition}). By Zorn's lemma, any differentiable atlas $\mathcal{A}$ is contained within a unique compatible differentiable atlas $\overline{\mathcal{A}}$ such that $\overline{\mathcal{A}}$ is maximal among all differentiable atlases compatible with $\mathcal{A}$, in the sense that if $\mathcal{A}$ and $\mathcal{A}'$ are compatible, then $\mathcal{A}' \subseteq \overline{\mathcal{A}}$.  A differentiable manifold is a pair $(M, \overline{\mathcal{A}})$, where $M$ is a topological manifold and $\overline{\mathcal{A}}$ is a maximal differentiable atlas on $M$. Therefore, given a differentiable atlas $\mathcal{A}$ on $M$, we may speak of the differentiable manifold determined by the pair $(M,\mathcal{A})$.

The notion of a $C^k$-manifold coincides with that of a differentiable manifold, except that the maps in (\ref{eqn:transition}) are required to be $k$-times differentiable. In the same way, the definition of a smooth manifold requires these maps to be smooth, a real analytic manifold requires them to be real analytic, and a polynomial manifold requires them to be polynomial. Further elaboration on terminology surrounding manifolds can be found in the many textbooks on manifold theory, such as \cite{Lee}.

\begin{definition}[Polynomial atlas]
A polynomial atlas $\mathcal{A}$ on a topological $n$-manifold $M$ is an atlas on $M$ such that the following condition is satisfied: for any charts $(U, \varphi)$ and $(V, \psi)$ in $\mathcal{A}$, either $U \cap V$ is empty, or 
\begin{equation} \label{eqn:polynomialcompatibility}
    \psi \circ \varphi^{-1} : \varphi(U \cap V) \to \psi(U \cap V)\quad\text{and}\quad \varphi \circ \psi^{-1} : \psi(U \cap V) \to \varphi(U \cap V)
\end{equation}
are restrictions of polynomials; that is, there exist polynomials $F, G : \mathbb{R}^n \to \mathbb{R}^n$ such that $\psi \circ \varphi^{-1} = F\rvert_{\varphi(U\cap V)}$ and $\varphi \circ \psi^{-1} = G\rvert_{\psi(U \cap V)}.$
\end{definition}

By analogy with differentiable manifolds, if $(M, \mathcal{A})$ is a topological $n$-manifold, and $\mathcal{A}'$ is a polynomial atlas on $M$, we say $\mathcal{A}$ and $\mathcal{A}'$ are compatible provided condition (\ref{eqn:polynomialcompatibility}) is satisfied for any charts $(U,\varphi) \in \mathcal{A}$ and $(V,\psi) \in \mathcal{A}'$. Again, any polynomial atlas on $M$ is contained in a maximal compatible polynomial atlas $\overline{\mathcal{A}}$. 

\begin{definition}[Polynomial manifold]
A polynomial $n$-manifold is a pair $(M, \overline{\mathcal{A}})$, where $M$ is a topological $n$-manifold, and $\overline{\mathcal{A}}$ is a maximal polynomial atlas on $M$. 
\end{definition}

Polynomial manifolds have been studied in \cite{Bieber} and \cite{Sabitova}, motivated in connection to the group of polynomial automorphisms of $\mathbb{R}^n$.  A \textit{polynomial automorphism} of $\mathbb{R}^n$ is an invertible polynomial $F: \mathbb{R}^n \to \mathbb{R}^n$ such that $F^{-1}$ is again a polynomial. While this group's elements have not (yet) been determined, the Jacobian conjecture over $\mathbb{R}$ asserts they are precisely the polynomials $F: \mathbb{R}^n \to \mathbb{R}^n$ such that $|J_F| \equiv c$ for some $c \neq 0$. Let us now give a proof of Proposition~\ref{prop:group}.

\begin{proof}[Proof of Proposition~\ref{prop:group}]
Consider the set $X_n$ of polynomials $F : \mathbb{R}^n \to \mathbb{R}^n$ satisfying $|J_F| \equiv c$ for some $c \neq 0$. The multivariate chain rule and $\det(AB) = \det(A)\det(B)$ imply $X_n$ is closed under composition. Function composition is associative. The identity map $I_n : \mathbb{R}^n \to \mathbb{R}^n$ is a polynomial and $|J_{I_n}| \equiv 1$, hence $I_n \in X_n$, so $X_n$ has an identity.  It follows that $X_n$ forms a group under composition if and only if $F \in X_n$ implies the existence of an inverse for $F$ in $X_n$, i.e. $F^{-1}$ exists and is a polynomial. Note: the inverse function theorem implies that if $|J_F| \equiv c \neq 0$, then $|J_{F^{-1}}| \equiv 1/c$, provided $F^{-1}$ exists.

$(\implies)$ If $X_n$ forms a group under function composition, then any polynomial $F: \mathbb{R}^n \to \mathbb{R}^n$ with constant nonzero Jacobian determinant is invertible and has a polynomial inverse, which is the statement of the Jacobian conjecture over $\mathbb{R}$. 

$(\impliedby)$ Conversely, if the Jacobian conjecture is true over $\mathbb{R}$, then $F \in X_n$ implies $F^{-1}$ exists and is a polynomial with constant Jacobian determinant, so $F^{-1} \in X_n$. 
\end{proof}

It remains to prove Theorem~\ref{thm:manifold}. Let $X_n$ be defined as in Proposition~\ref{prop:group}, and recall $\mathcal{A}_n$ is the collection of pairs $(\mathbb{R}^n, F)$, where $F: \mathbb{R}^n \to \mathbb{R}^n$ is a polynomial satisfying $|J_F| \equiv c$ for some $c \neq 0$.

\begin{proof}[Proof of Theorem~\ref{thm:manifold}]
$(\impliedby)$ Suppose the Jacobian conjecture is true over $\mathbb{R}$. We argue that $\mathcal{A}_n$ is a polynomial atlas on $\mathbb{R}^n$, so is contained in a maximal polynomial atlas, and determines a polynomial manifold structure on $\mathbb{R}^n$. Let $(\mathbb{R}^n, F)$ and $(\mathbb{R}^n, G)$ be any pairs in $\mathcal{A}_n$. By the Jacobian conjecture over $\mathbb{R}$, it follows that $F^{-1}$ and $G^{-1}$ exist and are polynomials. All polynomials are continuous, so $(\mathbb{R}^n, F)$ and $(\mathbb{R}^n, G)$ are actually \textit{charts} on $\mathbb{R}^n$, and $\mathcal{A}_n$ is an atlas on $\mathbb{R}^n$. Furthermore, $$F \circ G^{-1} : \mathbb{R}^n \to \mathbb{R}^n\quad\text{and}\quad G \circ F^{-1} : \mathbb{R}^n \to \mathbb{R}^n$$ are again polynomials, implying that $\mathcal{A}_n$ is a polynomial atlas on $\mathbb{R}^n$. 

$(\implies)$ Suppose the pair $(\mathbb{R}^n, \mathcal{A}_n)$ determines a polynomial manifold, which is to say $\mathcal{A}_n$ is a polynomial atlas on $\mathbb{R}^n$. This means any pairs $(\mathbb{R}^n, F)$ and $(\mathbb{R}^n, G)$ in $\mathcal{A}_n$ are charts, so $F$ and $G$ are in particular homeomorphisms. It therefore makes sense to consider the set $X_n$ as a subset of the group $\mathcal{H}$ of all homeomorphisms $\mathbb{R}^n \to \mathbb{R}^n$. We claim that $X_n$ is a subgroup of $\mathcal{H}$. To see this, first note that $I_n \in X_n$, so $X_n$ is nonempty; and, if $F \in X_n$ and $G \in X_n$, then $F \circ G^{-1} : \mathbb{R}^n \to \mathbb{R}^n$ also lies in $X_n$, by our assumption that $\mathcal{A}_n$ is a polynomial atlas on $\mathbb{R}^n$. We conclude that $X_n$ forms a group under composition, so Proposition~\ref{prop:group} implies the Jacobian conjecture over $\mathbb{R}$. 
\end{proof}

\subsection*{Acknowledgements}
Gratitude to Tyler Kline, Alexis Vizzerra, Tristan Phillips, Joseph Ruiz, Anton Izosimov, Akshita Sharma, Sam Nasreldine, and Doug Haessig for their interest, support, and feedback.
\nocite{Nash}

\nocite{Smith}
\bibliography{bib.bib}

\begin{thebibliography}{1}

\bibitem{Bieber}
K.~Dekimpe.
\newblock {\em Almost-{B}ieberbach groups: affine and polynomial structures},
  volume 1639 of {\em Lecture Notes in Mathematics}.
\newblock Springer-Verlag, Berlin, 1996.

\bibitem{Lee}
J.~M. Lee.
\newblock {\em Introduction to smooth manifolds}, volume 218 of {\em Graduate
  Texts in Mathematics}.
\newblock Springer, New York, second edition, 2013.

\bibitem{Nash}
J.~Nash.
\newblock Real algebraic manifolds.
\newblock {\em Ann. of Math. (2)}, 56:405--421, 1952.

\bibitem{Sabitova}
M.~Sabitova.
\newblock Polynomial manifolds. {I}.
\newblock {\em Tensor (N.S.)}, 62(1):28--41, 2000.

\bibitem{Smith}
K.~E. Smith, L.~Kahanp\"{a}\"{a}, P.~Kek\"{a}l\"{a}inen, and W.~Traves.
\newblock {\em An invitation to algebraic geometry}.
\newblock Universitext. Springer-Verlag, New York, 2000.

\end{thebibliography}
\bibliographystyle{abbrv}

\end{document}